\documentclass[11pt]{amsart}
\usepackage[utf8]{inputenc}
\usepackage{tikz-cd} 
\usepackage{amsmath}
\usepackage{amsthm}
\usepackage{amsfonts}
\usepackage[margin=1.2in]{geometry}
\usepackage{mathdots}
\usepackage{hyperref}
\hypersetup{colorlinks=true,allcolors=blue}
\usepackage[f]{esvect}

 \usepackage{amssymb}
 \usepackage{amsthm}
 \usepackage{amsmath}
 \usepackage{natbib}
 \usepackage{tikz}
 \usepackage{graphicx}
 \usepackage{epstopdf}
 \usepackage{xurl}
 \usepackage{multirow}
 \usepackage{xcolor}
 \usepackage[table]{xcolor}
 \usetikzlibrary{patterns}
\counterwithin{figure}{section}
\counterwithin{table}{section}
\usepackage{orcidlink}

\newtheorem{theorem}{Theorem}[section]
\newtheorem{definition}[theorem]{Definition}

\newtheorem{proposition}[theorem]{Proposition}

\newtheorem{example}[theorem]{Example}

\newcommand{\im}{\textrm{Im}}
\newcommand{\Q}{\mathcal{Q}}
\newcommand{\A}{\mathcal{A}}
\newcommand{\B}{\mathcal{B}}
\newcommand{\C}{\mathcal{C}}
\newcommand{\R}{\mathcal{R}}
\newcommand{\cE}{\mathcal{E}}

\begin{document}

\title{Discrete imprecise copulas and alternating sign matrices}

%\date{\today}

\author[T. Ko\v{s}ir]{Toma\v{z} Ko\v{s}ir \orcidlink{0000-0002-3661-8094} }
\address{University of Ljubljana, Faculty of Mathematics and Physics, Jadranska ulica 19, 1000 Ljubljana, Slovenia, and Institute of Mathematics, Physics and Mechanics, Jadranska ulica 19, 1000 Ljubljana, Slovenia} 
\email{tomaz.kosir@fmf.uni-lj.si}

\author[E. Perrone]{Elisa Perrone \orcidlink{0000-0003-0370-9835}}
\address{Eindhoven University of Technology, Department of Mathematics and Computer Science, Groene Loper 5, 5612 AZ, Eindhoven, The Netherlands} 
\email{e.perrone@tue.nl}

\author[N. Stopar]{Nik Stopar \orcidlink{0000-0002-0004-4957} } 
\address{University of Ljubljana, Faculty of Civil and Geodetic Engineering, Jamova cesta 2, 1000 Ljubljana, Slovenia, University of Ljubljana, Faculty of Mathematics and Physics, Jadranska ulica 19, 1000 Ljubljana, Slovenia, and Institute of Mathematics, Physics and Mechanics, Jadranska ulica 19, 1000 Ljubljana, Slovenia}
\email{nik.stopar@fgg.uni-lj.si}

\begin{abstract}
In this paper, we study discrete quasi-copulas and discrete imprecise copulas of minimal range, which naturally correspond to alternating sign matrices. We show that this family is invariant under all defect transformations on quasi-copulas and give a constructive proof demonstrating that discrete imprecise copulas of minimal range do not, in general, avoid sure loss. In contrast, we show that discrete imprecise copulas of minimal range that correspond to dense alternating sign matrices are always coherent, and hence avoid sure loss.
\end{abstract}

\thanks{The first author acknowledges financial support from the Slovenian Research and Innovation Agency (by research program No. P1-0448 until December 31, 2025, and by research program No. P1-0222 and research project No. J1-50002 after January 1, 2026). The third author acknowledges financial support from the Slovenian Research and Innovation Agency (by research program No. P1-0222 and research projects No. J1-70034 and J1-50002).}
\subjclass[2020]{Primary 62H05. Secondary 15B35%, 62H86
}%
%%%%%%KEYWORD%%%%%%%
\keywords{Discrete imprecise copulas. Discrete quasi-copulas. Defect transformations. Alternating Sign Matrices}
%% keywords here, in the form: keyword \sep keyword
%% PACS codes here, in the form: \PACS code \sep code
%% MSC codes here, in the form: \MSC code \sep code
%% or \MSC[2008] code \sep code (2000 is the default)

\maketitle

\section{Introduction}

The theory of imprecise probability, initiated by Walley \cite{Walley}, provides a framework for modeling uncertainty when precise probabilistic assessments are not available. Its applications span several areas, e.g., decision theory \cite{JSA,MMM,Troff}, reliability theory \cite{Coolen,OKS,UtCo,YDSS}, financial risk management \cite{ADEH,Delbaen,Vicig}, and game theory \cite{MiMo,Nau}. Imprecise probability of an event $A$ is described as an interval $[\underline{P}(A),\overline{P}(A)]$ containing possible values of the exact probability $P(A)$. The lower probability $\underline{P}(A)$ captures the evidence supporting $A$, while the upper probability $\overline{P}(A)$ expresses the lack of evidence against $A$ (here $\underline{P}$ and $\overline{P}$  are only assumed to be monotone on sets). Modeling imprecision of random variables is based on the notion of a \textit{$p$-box} (a probability box) \cite{FKGMS,FeTu,PVMM-2,TroDe,TMD,UtDe}. For a single random variable $X$, a $p$-box $(\underline{F},\overline{F})$, where $\underline{F}(x)$ and $\overline{F}(x)$ is a pair of cumulative distribution functions  such that $\underline{F}(x)\leq\overline{F}(x)$ for all $x\in\mathbb{R}$, is the set of all cumulative distribution functions $F(x)$ such that $\underline{F}(x)\leq F(x)\leq \overline{F}(x)$ for all $x\in\mathbb{R}$, where $\underline{F}(x)$ and $\overline{F}(x)$ represent the lower and upper bound for the probability of the event $[X\leq x]$. The point-wise infimum and supremum of any $p$-box, also called \textit{envelopes}, are exactly its lower and upper bound. Thus, they remain within the class of cumulative distribution functions. The situation changes dramatically for distribution functions of two or more variables, where the envelopes of a $p$-box may not be cumulative distribution functions \cite{PVMM-2}.
Hence, one of the main questions in the literature is whether one can extend the classical Sklar's theorem \cite{Sklar} to the imprecise setting. To deal with the question a notion of imprecise copula was introduced as a pair of quasi-copulas $(\underline{Q}(u,v),\overline{Q}(u,v))$ that satisfies certain properties \cite{PVMM}. See \cite{MMPV,FullSklar,MultiSklar} for recent results on the imprecise Sklar's theorem and \cite{Stop} for an extended discussion of the topic.

\smallskip
Two central notions in the theory of imprecise probability are related to existence of a copula in an imprecise copula. For an imprecise copula $(\underline{Q},\overline{Q})$, we denote by $\C$ the set of all proper bivariate copulas $C$ such that $\underline{Q}(x,y)\leq C(x,y)\leq \overline{Q}(x,y)$ for all $x,y$ in the domain. Then an imprecise copula $(\underline{Q},\overline{Q})$  {\emph{avoids sure loss}} if $\C\neq\emptyset$ \cite[Prop. 6]{PVMM-2} and \emph{is coherent} if the point-wise infimum of $\C$ is equal to $\underline{Q}$ and the point-wise supremum of $\C$ is equal to $\overline{Q}$ \cite[Prop. 9]{PVMM-2}. 
Omladi\v c and Stopar \cite[Example 18]{FinalS} provide examples of a bivariate imprecise copula both in discrete and full domain cases that \emph{do not avoid sure loss} and so they are not coherent either. They also characterize imprecise copulas that {avoid} sure loss (see \cite[Prop. 16]{FinalS} for the discrete case and \cite[Thm. 17]{FinalS} for the general case). In doing so, they give a negative answer to the question posed in \cite{MMPV}. Some of these results are based on six transformations on quasi-copulas that were introduced in Dibala et al. in \cite{DS-PMK}.
Two of them, the \textit{main} and the \textit{opposite defect} transformations, give rise to two imprecise copulas when applied to a quasi-copula (see \cite[Thm. 7]{FinalS}). 

In recent years, discrete imprecise copulas have received increasing attention in the literature. In particular, Ko\v sir and Perrone \cite{KoPe-DIC} investigated several properties of these functions. 
Since discrete copulas and quasi-copulas admit natural matrix representations \cite{AST-1,F-SQ-MU-F,KolMor,MST,Q-MS}, their study is closely related to the geometry of the corresponding convex polytopes, i.e., compact convex spaces with a finite number of extreme points. More precisely, for uniform grid domains, discrete copulas correspond to Birkhoff polytopes \cite{Ziegler_95}, while discrete quasi-copulas correspond to alternating sign matrix (ASM) polytopes \cite{striker_2009} (see \cite{Perrone2022,Perrone2021,PSU} and references therein for further details).
These polytopes, respectively, have the permutation matrices and the alternating sign matrices as their extreme points \cite{striker_2009}. 
Recall that an ASM is a matrix with entries in $\{-1,0,1\}$ such that the nonzero entries in each row and column alternate in sign and each row and column sums up to $1$. The set of all $n\times n$ ASMs is the Dedekind-MacNeille completion of the set of all $n\times n$ permutation matrices with respect to the Bruhat partial order \cite{LascouxSchutzenberger1996}.
On the quasi-copula side, ASMs correspond to discrete copulas and quasi-copulas of minimal range, which constitute the main object of study in this paper. Here, minimal range means that these discrete (quasi-)copulas take values only in $\frac{i}{n}$ for $0\le i\le n$, where $n+1$ denotes the number of grid points in each coordinate direction.

The main contribution of this paper is to further clarify the special role played by discrete quasi-copulas of minimal range and discrete imprecise copulas of minimal range (called \textit{ASM imprecise copulas}), which naturally correspond to ASMs. We show that this class of quasi-copulas is invariant under all six defect transformations introduced in \cite{DS-PMK}, highlighting its remarkable structural stability. However, this distinguished status does not ensure other desirable properties: we provide an explicit construction showing that ASM imprecise copulas do not, in general, avoid sure loss, complementing the examples of \cite{FinalS}. 
In contrast, we prove that the subclass corresponding to dense ASM is always coherent, and hence avoids sure loss. These results, summarized in Figure~\ref{fig:main-results}, reveal a subtle interplay between the combinatorial structure of ASMs and the consistency properties of imprecise copulas.
These findings further reinforce the connection between imprecise copula theory and the combinatorics of ASMs \cite{BiKo,Brualdi-Dahl,Dinkelman-Morris,Kobayashi,striker_2009,striker_2011}. In particular, the invariance of ASMs under defect transformations provides a new structural perspective that may prove useful in the study of their combinatorial properties. Conversely, the connection with ASMs yields further tools for the study of coherence and avoidance of sure loss in imprecise copula theory.

\smallskip
The paper is structured as follows. In the following section we introduce the basic notions of discrete copulas, quasi-copulas and the corresponding matrices. In Section \ref{sec:defect_transform}, we study the defect transformations and show that they leave the set of quasi-copulas of minimal range invariant. In Section \ref{DIC}, we introduce discrete imprecise copulas and give an example of a discrete imprecise copula of minimal range that does not avoid sure loss. In Section \ref{five}, we show that discrete imprecise copulas that correspond to dense ASMs are always coherent. In the last section, we provide some concluding remarks.

%%%schematic depiction of the paper
% Preamble:
% \usepackage{tikz}
% \usetikzlibrary{positioning}

\begin{figure}[t]
\centering
\begin{tikzpicture}[
    box/.style={
        draw,
        rounded corners=4pt,
        line width=0.7pt,
        inner sep=12pt
    },
    innerbox/.style={
        draw=black!60!black,
        rounded corners=4pt,
        dashed,
        line width=0.7pt,
        inner sep=10pt
    },
    check/.style={text=green!45!black, font=\bfseries\large},
    cross/.style={text=red!70!black, font=\bfseries\large}
]

\node[box, minimum width=0.86\textwidth, minimum height=6.2cm] (main) {};

\node[font=\bfseries\large, anchor=north] 
    at ([yshift=-0.35cm]main.north)
    {ASM (= discrete minimal-range) imprecise copulas};

\node[check, anchor=west] at ([xshift=2.0cm,yshift=1.55cm]main.west) {$\checkmark$};
\node[anchor=west] at ([xshift=2.5cm,yshift=1.55cm]main.west)
    {Invariant under defect transformations};

\node[cross, anchor=west] at ([xshift=2cm,yshift=0.65cm]main.west) {$\times$};
\node[anchor=west] at ([xshift=2.5cm,yshift=0.65cm]main.west)
    {Do not necessarily avoid sure loss};
\node[anchor=west] at ([xshift=2.5cm,yshift=0.15cm]main.west)
    {(constructive counterexample)};

\node[innerbox, minimum width=0.60\textwidth, minimum height=2.5cm, anchor=south]
    (dense) at ([xshift=-0.8cm,yshift=0.35cm]main.south) {};

\node[font=\bfseries, text=black!60!black, anchor=north]
    at ([xshift=-1.5cm,yshift=-0.25cm]dense.north)
    {Dense ASM imprecise copulas};

\node[check, anchor=west] at ([xshift=0.8cm,yshift=-0.05cm]dense.west) {$\checkmark$};
\node[anchor=west] at ([xshift=1.45cm,yshift=-0.05cm]dense.west)
    {Coherent};

\node[check, anchor=west] at ([xshift=0.8cm,yshift=-0.95cm]dense.west) {$\checkmark$};
\node[anchor=west] at ([xshift=1.45cm,yshift=-0.95cm]dense.west)
    {Avoid sure loss};
    
\end{tikzpicture}
\caption{Schematic summary of the main paper results.}
\label{fig:main-results}
\end{figure}

\section{Preliminaries}

In this section, we recall the most relevant results on copulas and quasi-copulas in the discrete setting. Here we restrict ourselves to the special case of uniform grid domains, that is, we consider a uniform partition of the unit interval $I_n=\{ 0, 1/n, \ldots, (n-1)/n, 1 \}$ for $n\in\mathbb{N}$ and define discrete (quasi-)copulas on the square grid $I_n^2$.
In such a case, a \emph{discrete copula} is a function $C: I_n^2 \rightarrow [0,1]$ that satisfies the following conditions:
\begin{enumerate}
\item[(C1)] $C(x,0)=C(0,y)=0$ and $C(x,1)=x$, $C(1,y)=y$ for every $x,y \in I_n$;
	\vspace{3pt}
\item[(C2)] $C$ is \emph{supermodular} or $2$-\emph{increasing} on any rectangle, i.e., 
$$C(x_1,y_1) + C(x_2,y_2) \geq C(x_1,y_2) + C(x_2,y_1)$$ 
for every $x_1, x_2, y_1, y_2 \in I_n$ such that $x_1 \leq x_2$ and $y_1 \leq y_2$.
\end{enumerate}

A \emph{discrete quasi-copula} is a function $Q$ on $I_n^2$ that satisfies the following conditions:
\begin{enumerate}
    \item[(Q1)] $Q(x,0)=Q(0,y)=0$ and $Q(x,1)=x$, $Q(1,y)=y$ for every $x,y\in I_n$;
    \item[(Q2)] $Q$ is \emph{supermodular} on any rectangle with at least one edge on the boundary of the unit square, i.e., 
    \[Q(x_1,y_1) + Q(x_2,y_2) \geq Q(x_1,y_2) + Q(x_2,y_1)\]
    for every $x_1, x_2, y_1, y_2 \in I_n$ such that $x_1\leq x_2$, $y_1\leq y_2$ and at least one of $x_1$, $x_2$, $y_1$, $y_2$ is either equal to 0 or to 1. 
\end{enumerate}

\smallskip
Note that by results of \cite{genest_characterization_1999} quasi-copulas can be charactarized also as the functions satisfying (Q1) and   
\begin{enumerate}
    \item[(Q2$^{\dagger})$] $Q$ is \emph{increasing} in each variable;
	\vspace{3pt}
    \item[(Q3$^{\dagger}$)] $Q$ satisfies the $1$-\emph{Lipschitz condition}, i.e.,
     $$ |Q(x_2,y_2) -Q(x_1,y_1)| \leq |x_2 - x_1| + |y_2 - y_1|$$
     for every $x_1,x_2,y_1,y_2 \in I_n.$ 
   
\end{enumerate}

\smallskip
We denote by $\Q_n$ the set of discrete quasi-copulas on $I_n^2$ and, respectively, by $\C_n$ the set of all discrete copulas on $I_n^2$. 

We notice that any discrete copula or quasi-copula $Q: I_n^2 \rightarrow [0,1]$ can be expressed as a map $Q':L_n^2 \rightarrow [0,n]$, with $L_n=\{0,1,\ldots,n\}$, through the following transformation:
\begin{equation}
\label{eq:Qprime}
Q'(r,s)=n\cdot Q\left(\frac{r}{n},\frac{s}{n}\right) \textrm{ for } r,s\in L_n.    
\end{equation}
Hence, we denote by $\Q_n'$ the set of all functions on $L_n^2$ that correspond to discrete quasi-copulas and by $\C_n'$ the set of all functions on $L_n^2$ that correspond to discrete copulas. Functions in $\C'_n$ and $\Q'_n$ satisfy conditions on $L_n^2$ that are analogous to conditions $(C1)$-$(C2)$ and $(Q1)$-$(Q2)$, respectively. We denote these conditions by $(C1')$-$(C2')$ and $(Q1')$-$(Q2')$, respectively.

Quasi-copulas and copulas on $I_n^2$ (or on $L_n^2$) can naturally be identified with square matrices of size $n$. 
In particular, we denote the matrix of $Q\in\Q_n$ by the same letter $Q$ and write  
$$Q=\begin{pmatrix}
 q_{11} & q_{12} & \cdots  & q_{1n} \\
 q_{21} & q_{22} & \cdots  & q_{2n} \\
 \vdots & \vdots & & \vdots\\
 q_{n1} & q_{n2} & \cdots  & q_{nn} 
\end{pmatrix},\ \ \mathrm{where}\ q_{rs}=Q\left(\frac{r}{n},\frac{s}{n}\right).$$
Here, we omit the values at $r=0$ and $s=0$ that are all $0$. In addition, we preserve the usual directions of the rows and columns in the matrices, which correspond to the usual coordinate system that is rotated by $90^{\circ}$ in a positive direction.

\begin{example}\label{example2.1}
Consider a quasi-copula
\renewcommand{\arraystretch}{1.3}
$$Q=\begin{pmatrix}
        0 & \frac14 & \frac14 & \frac14\\
        \frac14 & \frac14 & \frac12 & \frac12 \\
        \frac14 & \frac12 & \frac12 & \frac34 \\
        \frac14 & \frac12 & \frac34 &1
        \end{pmatrix}.$$ 
The corresponding map $Q'$ on $L_4$ is then given by
\renewcommand{\arraystretch}{1}
$$Q'=\begin{pmatrix}
        0 & 1 & 1 & 1\\
        1 & 1 & 2 & 2\\
        1 & 2 & 2 & 3\\
        1 & 2 & 3 & 4
\end{pmatrix}.$$ 
\end{example}

We explain below that discrete quasi-copulas on $L_n^2$ correspond to the points of the \textit{Alternating Sign Matrices (ASM) polytope} $\A^{B}_n$ \cite{AST-1}. This polytope, studied in \cite{striker_2009}, is the space of all \textit{alternating bistochastic matrices} (ABM), i.e., square matrices that generalize bistochastic matrices by allowing for negative entries. Specifically, Theorem~2.1 of \cite{striker_2009} states that a matrix 
\begin{equation}\label{matrix A}
A=\begin{pmatrix}
 a_{11} & a_{12} & \cdots  & a_{1n} \\
 a_{21} & a_{22} & \cdots  & a_{2n} \\
 \vdots & \vdots & & \vdots\\
 a_{n1} & a_{n2} & \cdots  & a_{nn} 
\end{pmatrix}
\end{equation}
belongs to the Alternating Sign Matrix Polytope, i.e., $A$ is an \emph{ABM}, if, for all $i, j =1, \ldots, n$, it satisfies the following relations:
\begin{equation}\label{ABM_conditions}
0\leq\sum_{s=1}^{i}a_{sj}\leq 1,
\quad
0\leq\sum_{r=1}^{j}a_{ir}\leq 1, \quad \sum_{s=1}^na_{sj}=1, \quad
\sum_{r=1}^na_{ir}=1.
\end{equation}
The vertices of $\A^{B}_n$ are the alternating sign matrices (ASM), which are alternating bistochastic matrices with entries equal to $-1, 0,$ or $1$. Conditions (\ref{ABM_conditions}) imply that in each row and each column of an ASM the nonzero entries alternate in sign. We denote the set of $n \times n$ alternating sign matrices by $\A_n$. By definition, in each row and each column of an ASM the first and the last nonzero entry are equal to $1$. In particular, all permutation matrices are ASM. We say that $A\in\A_n$ is a \emph{proper ASM} if it contains at least one entry equal to $-1$, and that a matrix is \textit{nonnegative} if all its entries are nonnegative.  
 
\medskip

We note that in the terminology used in the theory of ASM and ABM matrices, discrete quasi-copulas over $L_n$ are called \emph{corner sum matrices} \cite{BiKo,BS,Propp,RoRu}. %(see \eqref{eq:bij2} below). 
Indeed, given $A\in\A^B_n$, we can construct the unique quasi-copula $Q\in \Q_n$ associated with $A$ (see \cite[Prop. 3 \& Cor. 1]{AST-1}) as follows:
\begin{equation}
\label{eq:bij1}
Q\left(\frac{r}{n},0\right)=Q\left(0,\frac{s}{n}\right)=0,\ \textrm{ for } r,s\in L_n,\textrm{ and }
Q\left(\frac{r}{n},\frac{s}{n}\right)=\frac{1}{n}\sum_{i=1}^r\sum_{j=1}^s a_{ij}, \textrm{ for }r,s\ge 1,
\end{equation}
and the unique $Q'\in\Q'_n$ by
\begin{equation}
\label{eq:bij2}
Q'\left({r},0\right)=Q'\left(0,{s}\right)=0,\ \textrm{ for } r,s\in L_n,\textrm{ and }
Q'\left({r},{s}\right)=\sum_{i=1}^r\sum_{j=1}^s a_{ij}, \textrm{ for }r,s\ge 1.
\end{equation}
Conversely, for any $Q'\in\Q'_n$, the uniquely associated matrix $A=\left[a_{st}\right]_{s,t=1}^n $ in $\A^B_n$ is given by
\begin{equation}
\label{eq:transf}
a_{st}=Q'(s,t)+Q'(s-1,t-1)-Q'(s,t-1)-Q'(s-1,t).
\end{equation}
To emphasize the correspondence between $A$ and $Q'$, we sometimes write $A=A(Q')$ and $Q'=Q(A)$. The one-to-one relation can be transferred to $Q\in\Q_n$ by dividing all values in $Q'$ by $n$. 
Furthermore, the same bijective maps link the elements of $\C_n$ or $\C'_n$  with the elements of \emph{the Birkhoff polytope} $\B_n$ of all bistochastic matrices.

\begin{example}\label{example2.2}
    Consider again Example \ref{example2.1}. The ABM associated to $Q$ is
    \renewcommand{\arraystretch}{1.3}
   $$ A(Q) =\begin{pmatrix}
        0 & \frac14 & 0 & 0 \\
        \frac14 & -\frac14 & \frac14 & 0 \\
        0 & \frac14 & -\frac14 & \frac14 \\
        0 & 0 & \frac14 & 0
\end{pmatrix}$$
    and the matrix associated to $Q'$ is a proper ASM
    \renewcommand{\arraystretch}{1}
    $$A(Q')=\begin{pmatrix}
        0 & 1 & 0 &  0\\
        1 & -1 & 1 & 0\\
        0 & 1 & -1 & 1 \\
        0 & 0 & 1 & 0
\end{pmatrix}.$$
    
\end{example}

Next, we consider the set $\R$ of all rectangles with vertices on the grid $L_n\times L_n$. So, $\R$ is the set of all rectangles $[i,j]\times [k,l]$, where $0\leq i\leq j\leq n$ and $0\leq k\leq l\leq n$, and $[i,j]=\{i,i+1,\ldots,j\}$ for $i\leq j$.

Note that when we take an ASM or another matrix $A\in\A_n^B$ as in \eqref{matrix A}, then 
\begin{equation}\label{a_ijASM}
a_{ij}=V_{Q'}([i-1,i]\times [j-1,j])=nV_{Q}([i-1,i]\times[j-1,j])
\end{equation}
for $Q'=Q(A)$ and $Q$ is the quasi-copula related to $Q'$ as in \eqref{eq:Qprime}. In a similar way, we have 
\begin{equation}\label{V_Q(A)ASM}
    V_{Q'}(R)=nV_{Q}(R)=\sum_{r=i+1}^{j}\sum_{s=k+1}^{l}a_{rs}
\end{equation}
for any rectangle $R=[i,j]\times[k,l]\in\R$.

\medskip

In this paper, our focus is on ASM on the matrix side and we only consider functions on $L_n$ on the quasi-copula or copula side. We will not consider ABM anymore. So, we will simplify notation by dropping $'$ and writing $\C_n$ instead of $\C'_n$ and $\Q_n$ instead of $\Q'_n$ hereafter.

\section{Defect based transformations on minimal range discrete quasi-copulas}
\label{sec:defect_transform}

\medskip
The extreme points of $\A^B_n$ and $\B_n$ play a special role in their copula counterparts. 
In particular, they are linked with discrete quasi-copulas and discrete copulas whose range is exactly equal to $L_n$. Since $L_n$ is always contained in the range of $Q \in \Q_n$, we call discrete quasi-copulas $Q$ with $\im(Q)=L_n$ \emph{of minimal range} (analogously for discrete copulas). Such discrete quasi-copulas are called irreducible in \cite[Def. 8]{AST-1} or \cite[Def. 2.3]{Kob} because they are in fact the extreme points of $\Q_n$. We denote the set of all quasi-copulas on $L_n$ of minimal range by $\cE_n$. Quasi-copulas of minimal range will be the main focus of our paper. Observe that they are exactly the corner sum matrices of alternating sign matrices \cite{BiKo,BS,Propp,RoRu}. So the bijective correspondence between $\Q_n$ and $\A^B_n$ restricts to the correspondence between $\cE_n$ and the set of all ASM.

We are also interested in the existence of a discrete copula $C$ such that $Q_1\leq C\leq Q_2$ for two quasi-copulas $Q_1$ and $Q_2$ of minimal range. To address this, we need to introduce the notions of \textit{defect} and of \textit{imprecise copula} in the discrete setting.

Defects of quasi-copulas were introduced in Dibala et al \cite[\S 3]{DS-PMK} in general setting and adjusted to the discrete setting in \cite{KoPe-DIC}. Since we prefer the usual directions for the numbering of rows and columns in matrices, all our arrows in the notation are rotated by $90^{\circ}$ as compared to the arrows in \cite{DS-PMK}. So the arrows correspond to directions in the matrices rather than directions in the coordinate system in the domain of copulas.

Recall that $\R$ is the set of all rectangles on the grid $L_n^2$. For each point $(r,s)\in L_n^2$ we introduce four subsets of rectangles. These are rectangles that have one of the corners fixed on the grid. In particular, we define:
\begin{align*}
    \R_{\searrow}(r,s)&=\left\{[r,r+i]\times [s,s+j];\ 1\leq i\leq n-r, 1\leq j\leq n-s\right\},\\
    \R_{\swarrow}(r,s)&=\left\{[r,r+i]\times [j,s];\ 1\leq i\leq n-r, 1\leq j\leq s\right\},\\
    \R_{\nwarrow}(r,s)&=\left\{[i,r]\times [j,s];\ 1\leq i\leq r, 1\leq j\leq s\right\},\\
    \R_{\nearrow}(r,s)&=\left\{[i,r]\times [s,s+j];\ 1\leq i\leq r, 1\leq j\leq n-s\right\}.
\end{align*}
Observe that the values $a_{ij}$ of the corresponding matrices represent the mass that each $1\times 1$ square contains, cf. \eqref{a_ijASM}. Then the volumes for a general rectangle on the grid $L_n\times L_n$ is a sum of the volumes of $1\times 1$ squares it contains as in \eqref{V_Q(A)ASM}. We illustrate this by an example.

\begin{example}
{Rectangle} { $R=[2,5]\times [1,5]$} shown with red squares in Figure~\ref{fig:rectangle} { belongs to $\R_{\searrow}(2,1)$, $\R_{\swarrow}(2,5)$, $ \R_{\nwarrow}(5,5)$ and  $\R_{\nearrow}(5,1)$. Note that the vertices of the rectangle have coordinates $(2,1)$, $(2,5)$, $(5,5)$ and $(5,1)$ on the grid. Each of these is marked by a small \emph{x} on the picture.}
\begin{figure}[h]
\begin{center}
\includegraphics[height=7cm]{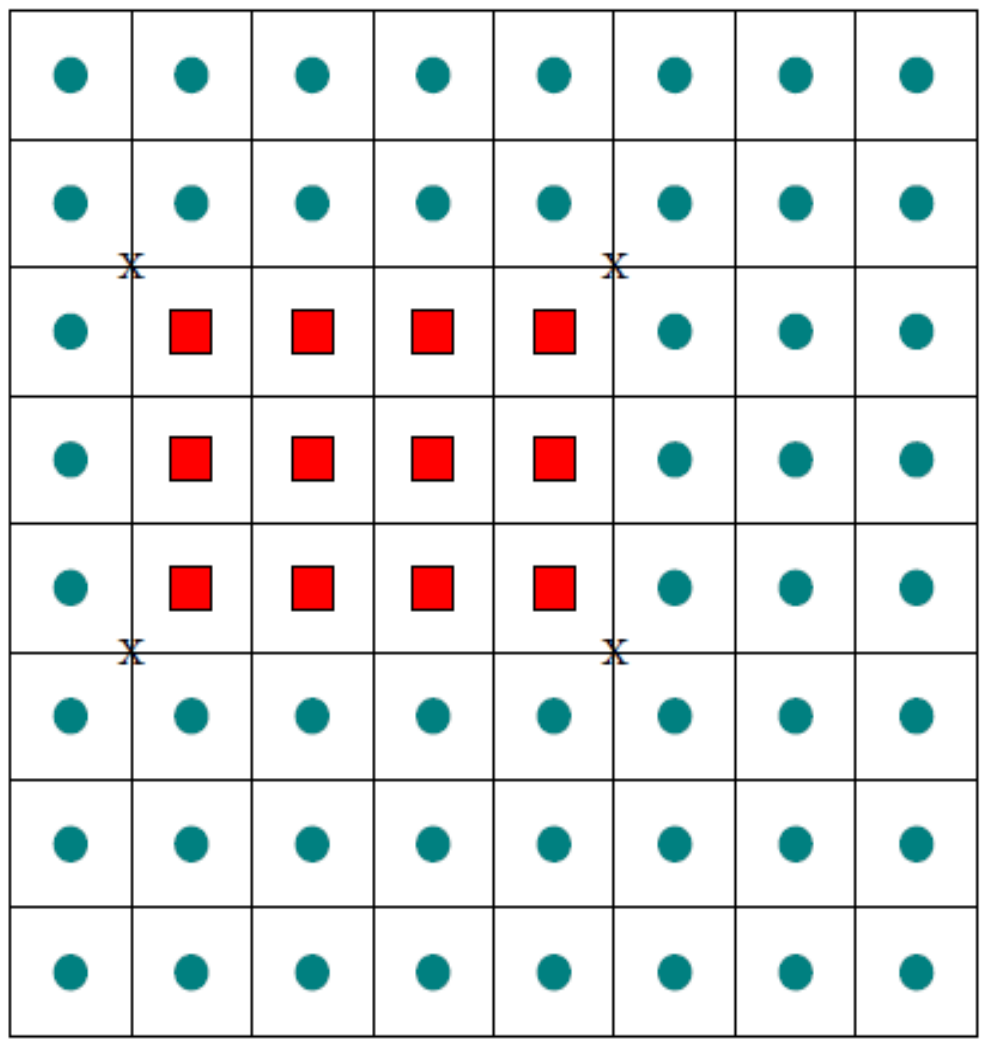}
\caption{A rectangle in an $8 \times 8$ matrix corresponding to the grid $L_8^2$.}
\end{center}
\label{fig:rectangle}
\end{figure}

The $Q$-volume of $R$ is equal to the sum of all entries of $A(Q)$ with indices in $(2,5] \times (1,5]$. These entries represent masses in each of $1\times 1$ squares of $R$.
\end{example}

Now, for each discrete quasi-copula $Q\in\Q_n$ we define four \emph{directional defect matrices} $D_{\nearrow}^Q,D_{\nwarrow}^Q,D_{\swarrow}^Q$ and $D_{\searrow}^Q$.  Their entries are numbered by the set $L_n^2$. We omit the top row and left column of zeros and consider them as square matrices of size $n$. They are given by
\begin{align*}
    D_{\searrow}^Q(r,s)&=\min\{0,V_Q(R);\ R\in\R_{\searrow}(r,s)\},\\
    D_{\swarrow}^Q(r,s)&=\min\{0,V_Q(R);\ R\in\R_{\swarrow}(r,s)\},\\
    D_{\nwarrow}^Q(r,s)&=\min\{0,V_Q(R);\ R\in\R_{\nwarrow}(r,s)\},\\
    D_{\nearrow}^Q(r,s)&=\min\{0,V_Q(R);\ R\in\R_{\nearrow}(r,s)\}.
\end{align*}
Observe that $Q$ is a copula if and only if any (and hence all) of the defect matrices is equal to $0$. So, it is only interesting to study defect matrices for proper quasi-copulas. 

Two additional defect matrices are important. They are called \emph{the main and the opposite defect matrices} and denoted by $D_M^Q$ and $D_O^Q$, respectively. Their entries are given by
\begin{align*}
    D_M^Q(r,s)&=\min\left\{D_{\searrow}^Q(r,s),D_{\nwarrow}^Q(r,s)\right\},\\
    D_O^Q(r,s)&=\min\left\{D_{\swarrow}^Q(r,s),D_{\nearrow}^Q(r,s)\right\}.
\end{align*}
To simplify the notation we replace the superscript $Q$ by $A$ when we consider an ASM matrix $A$ instead of the corresponding quasi-copula $Q(A)$ in the computation of volumes $V_Q(R)$. Then we write $D_{\searrow}^A$, etc. In such a case, we refer to the defect matrices of an ASM, meaning the defect matrices of the corresponding quasi-copula $Q(A)\in\Q_n$.

\begin{example}\label{Ex:F23+}
    Consider the ASM below $$F_3^2=\begin{pmatrix}
        0 & 1 & 0 \\
        1 & -1 & 1 \\
        0 & 1 & 0
\end{pmatrix}$$ and its associated discrete quasi-copula 
$$Q=Q(F_3^2)=\begin{pmatrix}
        0 & 1 & 1 \\
        1 & 1 & 2 \\
        1 & 2 & 3
\end{pmatrix}.$$ The corresponding directional defect matrices are
$$D_{\searrow}^{Q}=\begin{pmatrix}
       -1 & 0 & 0 \\
       0  & 0 & 0 \\
       0  & 0 & 0
\end{pmatrix},
    D_{\swarrow}^{Q}=\begin{pmatrix}
       0  & -1& 0 \\
       0  & 0 & 0 \\
       0  & 0 & 0
\end{pmatrix}, 
    D_{\nwarrow}^{Q}=\begin{pmatrix}
       0  & 0 & 0 \\
       0  & -1& 0 \\
       0  & 0 & 0
\end{pmatrix}, 
%     \ \mathrm{and}\ 
    D_{\nearrow}^{Q}=\begin{pmatrix}
        0 & 0 & 0 \\
        -1 & 0 & 0 \\
        0 & 0 & 0
\end{pmatrix}
$$
The main and the opposite defect matrices are
$$
    D_M^{Q}=\begin{pmatrix}
       -1  & 0 & 0 \\
       0  & -1& 0 \\
       0  & 0 & 0
\end{pmatrix}, 
     \ \mathit{and}\ 
    D_O^{Q}=\begin{pmatrix}
        0 & -1 & 0 \\
        -1 & 0 & 0 \\
        0 & 0 & 0
\end{pmatrix}.
$$

    Two other matrices and each with one of its defect matrices are
$$
   A= \begin{pmatrix}
                        0 & 0  & 0 & 1 & 0  & 0 \\
                        0  & 0 & 1 & 0 & 0 & 0\\
                        0 & 1 & 0 & -1 & 0 & 1\\
                        1 & 0 & -1 & 0 & 1 & 0\\
                        0 & 0 & 0 & 1 & 0 & 0\\
                        0 & 0 & 1 & 0 & 0 & 0 \\                        
\end{pmatrix}
\textrm{ with } 
D_M^A=  \begin{pmatrix}
                         0 & -1 & -1 & 0 & 0& 0\\
                         -1 & -2 & -1 & 0 & 0& 0\\
                        -1 & -1 & 0  & -1 & -1& 0\\
                         0 & 0 & -1 & -2 &  -1& 0\\
                         0 & 0 & -1 & -1 & 0 & 0\\
                        0 & 0  & 0 & 0 & 0 & 0 \\
\end{pmatrix}
$$
and
$$B=\begin{pmatrix}
0 & 0 & 0 & 1 & 0 & 0 \\
0 & 0 & 1 & 0 & 0 & 0 \\
0 & 1 & 0 & -1 & 1 & 0 \\
1 & 0 & -1 & 1 & -1 & 1 \\
0 & 0 & 1 & 0 & 0 & 0 \\
0 & 0 & 0 & 0 & 1 & 0
\end{pmatrix} 
\textrm{ with }
D^B_{\searrow}= \begin{pmatrix}
0 & 0 & -1 & 0 & 0 & 0 \\
0 & -1 & -1 & 0 & 0 & 0 \\
-1 & -1 & 0 & -1 & 0 & 0 \\
0 & 0 & 0 & 0 & 0 & 0 \\
0 & 0 & 0 & 0 & 0 & 0 \\
0 & 0 & 0 & 0 & 0 & 0
\end{pmatrix}
$$
\end{example}

Using the defect matrices we obtain six transformations on quasi-copulas. They are:
\begin{align*}
    Q_{\searrow}&=Q-D^Q_{\searrow},\\
    Q_{\swarrow}&=Q+D^Q_{\swarrow},\\
    Q_{\nwarrow}&=Q-D^Q_{\nwarrow},\\
    Q_{\nearrow}&=Q+D^Q_{\nearrow},\\
    Q_{M}&=Q-D^Q_{M},\\
    Q_{O}&=Q+D^Q_{O}.
\end{align*}
Dibala et al \cite[Thm. 4.3]{DS-PMK} show that all six transformations applied to a quasi-copula yield quasi-copulas. 

\begin{theorem}\label{thm:ASM_transformations}
    Suppose that $A$ is a proper ASM and $Q(A)$ the corresponding quasi-copula. Then all six transformations applied to $Q(A)$ yield a quasi-copula such that the corresponding ABM is an ASM.
\end{theorem}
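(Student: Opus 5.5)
Since Dibala et al.\ \cite[Thm. 4.3]{DS-PMK} already guarantee that each of the six transformed functions is a quasi-copula, and ASMs correspond exactly to quasi-copulas of minimal range (elements of $\cE_n$), it suffices to show that each transform takes only integer values. The values lie in $[0,n]$ because the result is a quasi-copula, and $L_n$ is always contained in the range, so an integer-valued transform has range exactly $L_n$. Its associated ABM, computed via \eqref{eq:transf}, then has integer entries, and an ABM with integer entries satisfying \eqref{ABM_conditions} has all partial sums in $\{0,1\}$, hence entries in $\{-1,0,1\}$, i.e.\ it is an ASM.

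The integrality reduces to a statement about defects. For $Q=Q(A)$ with $A$ an ASM, every volume $V_Q(R)$ is, by \eqref{V_Q(A)ASM}, a sum of entries of $A$ and therefore an integer. Each directional defect $D^Q_{\searrow}(r,s)$, $D^Q_{\swarrow}(r,s)$, $D^Q_{\nwarrow}(r,s)$, $D^Q_{\nearrow}(r,s)$ is the minimum of $0$ and finitely many such integers, so it is an integer. The main and opposite defects $D^Q_M$ and $D^Q_O$ are minima of two integers and are integers as well. Since $Q$ itself is integer-valued on $L_n^2$, each of $Q\pm D$ is integer-valued.

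Combining these observations, each of the six transforms is a quasi-copula (by \cite[Thm. 4.3]{DS-PMK}) with values in $\mathbb{Z}\cap[0,n]=L_n$, so it lies in $\cE_n$. By the bijection between $\cE_n$ and $\A_n$ noted at the start of this section, the corresponding ABM is an ASM. The hypothesis that $A$ is proper is needed only to make the transformation nontrivial; if $A$ were not proper, all defects would vanish and the statement would hold trivially.

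The main step to check carefully is the claim that an integer-valued discrete quasi-copula on $L_n^2$ gives an ASM rather than merely an integer matrix. This follows because the row and column partial sums of $A$ are differences $Q(r,s)-Q(r-1,s)$, which lie in $[0,1]$ by the Lipschitz and monotonicity properties (Q2$^\dagger$)--(Q3$^\dagger$), and are integers, hence equal to $0$ or $1$. Consecutive differences of a sequence with values in $\{0,1\}$ that starts at $0$ and ends at $1$ then alternate in sign, which is precisely the ASM structure.
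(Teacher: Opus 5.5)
Your proposal is correct and follows essentially the same route as the paper's proof. Both arguments use three ingredients: the defects of $Q(A)$ are integers, so the six transforms are integer-valued; \cite[Thm. 4.3]{DS-PMK} makes each transform a quasi-copula; and integer partial sums confined to $[0,1]$ (by monotonicity and the Lipschitz property, i.e.\ conditions \eqref{ABM_conditions}) force entries in $\{-1,0,1\}$ with alternating signs.
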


\begin{proof}
    Suppose that $A\in\A_n$ and that $Q(A)$ is the corresponding mapping $L_n\times L_n\to L_n$. Then all six matrices corresponding to defects are matrices with integer entries. Also, the transformed matrices $Q(A)_*$, where $*\in\{\searrow,\swarrow,\nearrow,\nearrow, M,O\}$ are matrices over integer numbers. By \cite[Thm. 4.3]{DS-PMK}, they are quasi-copulas. By the $1$-Lipschitz property (Q3$^{\dagger} $) it follows that the corresponding ABM has all entries in $\{-1,0,1\}$. Moreover, since it has to satisfy conditions \eqref{ABM_conditions}, it follows that the two consecutive nonzero entries in each row and in each column have to alternate in sign. Namely, we have 
    $$0\leq\sum_{r=1}^{j-1}a_{ir} \leq 1, \quad 0\leq \sum_{r=1}^{k}a_{ir}\leq 1$$ 
    for any $i$ and any $j,k$ with $j<k$ and therefore 
    $$\left|\sum_{r=j}^{k}a_{ir}\right|\leq 1.$$ 
    Then if $a_{ij}$ and $a_{ik}$ are two consecutive nonzero entries in a row they have to be of distinct sign. The argument for column entries is similar.
\end{proof}

\begin{example}
    Consider again the matrix $F_3^2$ from Example \ref{Ex:F23+}. For its associated quasi-copula $Q=Q(F_3^2)$, the six transformations give
$$Q_{\searrow}=\begin{pmatrix}
        1 & 1 & 1 \\
        1 & 1 & 2 \\
        1 & 2 & 3
\end{pmatrix},\ 
    Q_{\swarrow}=\begin{pmatrix}
        0 & 0 & 1 \\
        1 & 1 & 2 \\
        1 & 2 & 3
\end{pmatrix},\  
    Q_{\nwarrow}=\begin{pmatrix}
        0 & 1 & 1 \\
        1 & 2 & 2 \\
        1 & 2 & 3
\end{pmatrix}, $$
%     \ \mathrm{and}\ 
$$    Q_{\nearrow}=\begin{pmatrix}
        0 & 1 & 1 \\
        0 & 1 & 2 \\
        1 & 2 & 3
\end{pmatrix},\ 
    Q_M=\begin{pmatrix}
        1 & 1 & 1 \\
        1 & 2 & 2 \\
        1 & 2 & 3
\end{pmatrix}, 
     \ \mathit{and}\ 
    Q_O=\begin{pmatrix}
        0 & 0 & 1 \\
        0 & 1 & 2 \\
        1 & 2 & 3
\end{pmatrix}.
$$
The ASM corresponding to these quasi-copulas, that are in fact copulas, are exactly all six $3\times 3$ permutation matrices
$$A\left(Q_{\searrow}\right)=\begin{pmatrix}
        1 & 0 & 0 \\
        0 & 0 & 1 \\
        0 & 1 & 0
\end{pmatrix},\ 
    A\left(Q_{\swarrow}\right)=\begin{pmatrix}
        0 & 0 & 1 \\
        1 & 0 & 0 \\
        0 & 1 & 0
\end{pmatrix},\  
    A\left(Q_{\nwarrow}\right)=\begin{pmatrix}
        0 & 1 & 0 \\
        1 & 0 & 0 \\
        0 & 0 & 1
\end{pmatrix}, $$
%     \ \mathrm{and}\ 
$$    A\left(Q_{\nearrow}\right)=\begin{pmatrix}
        0 & 1 & 0 \\
        0 & 0 & 1 \\
        1 & 0 & 0
\end{pmatrix},\ 
    A\left(Q_M\right)=\begin{pmatrix}
        1 & 0 & 0 \\
        0 & 1 & 0 \\
        0 & 0 & 1
\end{pmatrix}, 
     \ \mathit{and}\ 
    A\left(Q_O\right)=\begin{pmatrix}
        0 & 0 & 1 \\
        0 & 1 & 0 \\
        1 & 0 & 0
\end{pmatrix}.
$$

For the matrices $A$ and $B$ from Example~\ref{Ex:F23+} we have
$$Q(A)=\begin{pmatrix}
0 & 0 & 0 & 1 & 1 & 1 \\
0 & 0 & 1 & 2 & 2 & 2 \\
0 & 1 & 2 & 2 & 2 & 3 \\
1 & 2 & 2 & 2 & 3 & 4 \\
1 & 2 & 2 & 3 & 4 & 5 \\
1 & 2 & 3 & 4 & 5 & 6
\end{pmatrix},\ 
Q(A)_M=\begin{pmatrix}
0 & 1 & 1 & 1 & 1 & 1 \\
1 & 2 & 2 & 2 & 2 & 2 \\
1 & 2 & 2 & 3 & 3 & 3 \\
1 & 2 & 3 & 4 & 4 & 4 \\
1 & 2 & 3 & 4 & 4 & 5 \\
1 & 2 & 3 & 4 & 5 & 6
\end{pmatrix}
$$
and
$$Q(B)=\begin{pmatrix}
0 & 0 & 0 & 1 & 1 & 1 \\
0 & 0 & 1 & 2 & 2 & 2 \\
0 & 1 & 2 & 2 & 3 & 3 \\
1 & 2 & 2 & 3 & 3 & 4 \\
1 & 2 & 3 & 4 & 4 & 5 \\
1 & 2 & 3 & 4 & 5 & 6
\end{pmatrix},\quad Q(B)_{\searrow}=\begin{pmatrix}
0 & 0 & 1 & 1 & 1 & 1 \\
0 & 1 & 2 & 2 & 2 & 2 \\
1 & 2 & 2 & 3 & 3 & 3 \\
1 & 2 & 2 & 3 & 3 & 4 \\
1 & 2 & 3 & 4 & 4 & 5 \\
1 & 2 & 3 & 4 & 5 & 6
\end{pmatrix}. $$
The ASM corresponding to the two transformation are 
$$A\left(Q(A)_M\right)=\begin{pmatrix}
0 & 1 & 0 & 0 & 0 & 0 \\
1 & 0 & 0 & 0 & 0 & 0 \\
0 & 0 & 0 & 1 & 0 & 0 \\
0 & 0 & 1 & 0 & 0 & 0 \\
0 & 0 & 0 & 0 & 0 & 1 \\
0 & 0 & 0 & 0 & 1 & 0
\end{pmatrix} \quad\text{ and } \quad A\left(Q(B)_{\searrow}\right)=\begin{pmatrix}
0 & 0 & 1 & 0 & 0 & 0 \\
0 & 1 & 0 & 0 & 0 & 0 \\
1 & 0 & -1 & 1 & 0 & 0 \\
0 & 0 & 0 & 0 & 0 & 1 \\
0 & 0 & 1 & 0 & 0 & 0 \\
0 & 0 & 0 & 0 & 1 & 0
\end{pmatrix}. $$
\end{example}

\section{ASM imprecise copulas need not avoid sure loss}\label{DIC}

In this section, we study ASM imprecise copulas, namely discrete imprecise copulas whose discrete quasi-copulas correspond to ASMs. We recall basic definitions and define when an imprecise copula is coherent and when it {avoids} sure loss. Then, we show that ASM imprecise copulas do not in general avoid sure loss.

\begin{definition}
    A pair $(P,Q)$ of functions $P,Q:L_n^2\to {[0,n]}$ is called a \emph{discrete imprecise copula} if $P$ and $Q$ satisfy the following conditions:
    \begin{enumerate}
        \item[(IC1)]  Both $P$ and $Q$ satisfy condition $(Q1')$.
        \item[(IC2)] For each rectangle $R=[i,j]\times [k,l]\in\R$ we have:
        \begin{align*}
            Q(i,k)+P(j,l)-P(i,l)-P(j,k)&\geq 0,\\
            P(i,k)+Q(j,l)-P(i,l)-P(j,k)&\geq 0,\\
            Q(i,k)+Q(j,l)-Q(i,l)-P(j,k)&\geq 0,\\
            Q(i,k)+Q(j,l)-P(i,l)-Q(j,k)&\geq 0.
        \end{align*}
    \end{enumerate}
\end{definition}
It follows from a result of Montes et al. \cite[Prop. 2]{MMPV} that given a {discrete} imprecise copula then both $P$ and $Q$ are quasi-copulas and $P(r,s)\leq Q(r,s)$ for all $(r,s)$, i.e., $P\leq Q$ in the {point}-wise order.
Dibala et al \cite[Thm. 5.2]{DS-PMK} show that a pair $(P,Q)$ of quasi-copulas is an imprecise copula if and only if $P_M\le Q$ and $P\le Q_O$. In \cite{FinalS} Omladi\v{c} and Stopar observed that the pairs $(P,P_M)$ and $(Q_O,Q)$ are imprecise copulas as well. Repeating the operations on the first of the latter two imprecise copulas, one obtains $P\leq (P_M)_O\leq P_M\leq Q$, so that $((P_M)_O,P_M)$ is an imprecise copula inside the original imprecise copula. Iterating the process further, we get a sequence of embedded imprecise copulas $(P_k,Q_k)$ given by $P_0=P$, $Q_0=Q$ and $P_k=(Q_{k})_O, Q_k=(P_{k-1})_M$. The limiting pair $(\overline{P},\overline{Q})$ is also an imprecise copula which satisfies the equalities $(\overline{P})_M=\overline{Q}$ and $(\overline{Q})_O=\overline{P}$. See \cite[Prop. 10]{FinalS} for the results on the iteration process. The limiting process justifies the following definition.

\begin{definition}
\label{def:imco}
    A discrete imprecise copula $(P,Q)$ is \emph{self-dual} if $P_M=Q$ and $Q_O=P$. Alternatively, we say that a pair of discrete quasi-copulas $(P,Q)$ is \emph{a dual pair} if $(P,Q)$ is a self-dual discrete imprecise copula. Furthermore, if $P,Q\in\Q_n'$ and $A=A(P)$, $B=A(Q)$ then we say that a pair of ASM $(A,B)$ is \emph{a dual pair} if $(P,Q)$ is a self-dual discrete imprecise copula, i.e., if $(P,Q)$ is a dual pair of discrete quasi-copulas.  
\end{definition}

It follows from Definition~\ref{def:imco} that an imprecise copula $(P,Q)$ is self-dual if and only if $D_M^P=D_O^Q=P-Q$.

An example of a discrete imprecise copula $(P,Q)$, where both $P$ and $Q$ are quasi-copulas of minimal range of size $7$, is given by Omladi\v{c} and Stopar \cite[Ex. 11]{FinalS}. The same example gives a self-dual discrete imprecise copula which is then extended to a dual pair of proper full-domain quasi-copulas in \cite[Sect. 5]{K-BKOS}. In \cite{KoPe-DIC} a further generalization of this example that yields self-dual discrete imprecise copulas to finer grids of size $n\ge 8$ is given. 

Next, we introduce the two most studied properties of imprecise copulas.

\begin{definition}\label{def_no sure loss}
    A discrete imprecise copula {$(P,Q)$}  {\emph{defined on $L_n^2$ avoids sure loss}} if there exists a discrete copula $C$ defined on $L_n^2$ such that $P\leq C\leq Q$. 
\end{definition}

\begin{definition}\label{def_coherence}
    Suppose that a {discrete} imprecise copula $(P,Q)$  {\emph{avoids sure loss}}, i.e., the set $\C(P,Q)$ of all copulas such that $P\leq C\leq Q$ is not empty. Then $(P,Q)$ is \emph{coherent} if $P(r,s)=\inf_{C\in\C(P,Q)} C(r,s)$ and $Q(r,s)=\sup_{C\in\C(P,Q)} C(r,s)$ for all $r,s\in L_n$. 
\end{definition}

In \cite{FinalS}, it is shown that there exists a discrete imprecise copula $(P,Q)$ defined on $L_{n}^2$ (with $n=10$) that does not avoid sure loss, that is, there is no discrete copula $C$ on $L_n^2$ such that $P(x,y) \le C(x,y) \le Q(x,y)$ for all $(x,y) \in L_n^2$.
By modifying the construction from \cite{FinalS}, we obtain the following stronger result.

\begin{proposition}
    For any $n \ge 17$ there exists a self-dual discrete imprecise copula $(P,Q)$ for which both $P$ and $Q$ are minimal range discrete quasi-copulas on $L_n^2$ such that $(P,Q)$ does not avoid sure loss.
\end{proposition}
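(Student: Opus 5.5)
The plan is to construct $(P,Q)$ explicitly, starting from the construction in \cite[Example 18]{FinalS} and moving it onto the minimal-range setting. First I would take a proper ASM $A$ of size $17$ and set $P=Q(A)$. I would then compute $Q:=P_M$ and check that $Q_O=P$, so that $(P,Q)$ is a dual pair in the sense of Definition~\ref{def:imco}. By Theorem~\ref{thm:ASM_transformations}, $Q=P_M$ is again of minimal range. By \cite{FinalS}, $(P,P_M)$ is a discrete imprecise copula. The relation $Q_O=P$ then gives self-duality, which amounts to checking $D_M^P=D_O^Q=P-Q$.

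To choose $A$, I would build a block pattern modelled on \cite[Ex. 11]{FinalS} and \cite{KoPe-DIC}. The pattern glues together several copies of small defect patterns, such as $F_3^2$ and the $6\times 6$ matrices $A,B$ of Example~\ref{Ex:F23+}. The gluing should force the main defect of $P$ to be large in two overlapping regions, so that the upper bound $Q$ and the lower bound $P$ pinch a copula in incompatible ways.

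The core of the argument is proving that no discrete copula $C$ with $P\le C\le Q$ exists. I would argue by contradiction using volumes. Suppose such a $C$ exists and let $M=A(C)$; this is a nonnegative integer-valued bistochastic matrix, hence a permutation matrix. A copula between two integer-valued functions can be assumed integer-valued, either by total unimodularity of the transportation polytope or simply because the extreme points are permutation matrices and the bounds are integral. For rectangles $R=[i,j]\times[k,l]$, the bounds give
\[
V_C(R)\le Q(j,l)-P(i,l)-P(j,k)+Q(i,k).
\]
This yields upper bounds on the number of ones of $M$ in selected rectangles; the analogous combinations with the roles of $P$ and $Q$ swapped give lower bounds. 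I would pick a chain of disjoint rectangles. Their lower bounds would force at least $m$ ones of $M$ into some set of rows or columns, while the upper bounds, together with the row and column sums, allow at most $m-1$, giving a contradiction. In the style of \cite[Prop. 16]{FinalS}, this is a Hall-type or flow-type obstruction: the interval constraints on corner sums define a bipartite flow problem, and I would exhibit a cut violating it.

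To pass from $n=17$ to all $n\ge 17$, I would take the direct sum $A\oplus I_{n-17}$ and the corresponding $P$, with $Q$ obtained analogously. The identity block adds no defect, so duality is preserved and the construction stays within minimal range. For any copula $C$ between these bounds, the corner values at $(17,17)$ are pinned to $17$, so $C$ splits into blocks. Its restriction to $L_{17}^2$ would then be a copula between the $17$-bounds, which contradicts the base case.

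I expect the main obstacle to be finding the actual $17\times 17$ ASM and verifying that $P_M$ followed by the opposite transformation returns exactly $P$. Computing $D_M$ requires minimizing over all rectangles anchored at each point. I would organise this verification blockwise, using the fact that the ones and minus ones are localized, and aim for $D_M^P$ to be supported on a few diamond-shaped regions like those in Example~\ref{Ex:F23+}.
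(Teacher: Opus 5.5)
Your proposal has a genuine gap. It is a search strategy, and the step you postpone as ``the main obstacle'' is the whole content of the statement. You never exhibit the $17\times 17$ ASM. The obstruction you describe (a chain of disjoint rectangles, a violated cut in a flow problem) is not a certificate anyone can check, so nothing is proved. The paper supplies exactly these two missing items. First, an explicit $A\in\A_{17}$ with $P_0=Q(A)$ and $Q_0=Q(A)_M$, which is an imprecise copula by \cite[Thm.~7]{FinalS}. Second, a non-rectangular region $R$, a union of $39$ unit squares whose six vertices of positive multiplicity all carry $D_M^{P_0}=-1$, for which the functional of \cite[Prop.~16]{FinalS} is negative:
\[
L^{(P_0,Q_0)}(R)=V_{P_0}(R)+\sum_{m_R(\mathbf{y})>0}\bigl(-D_M^{P_0}(\mathbf{y})\bigr)m_R(\mathbf{y})=-7+6=-1.
\]

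Second, you insist on an $A$ with $(P_M)_O=P$ exactly. Nothing guarantees that an ASM witnessing sure loss is already self-dual, and you have no fallback if it is not. The missing idea is that self-duality comes for free. From any $(P_0,Q_0)=(Q(A),Q(A)_M)$ that fails to avoid sure loss, iterate $P_{i+1}=(Q_i)_O$, $Q_{i+1}=(P_{i+1})_M$.
By \cite[Prop.~10]{FinalS} the pairs stay nested, $P_0\le P_i\le Q_i\le Q_0$, and converge to a self-dual pair $(P,Q)$.
By Theorem~\ref{thm:ASM_transformations} every iterate has minimal range; this is where that theorem is really used, not just for $P_M$.
Since $\A_{17}$ is finite, the monotone sequences stabilize, so $P$ and $Q$ have minimal range.
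Finally $\C(P,Q)\subseteq\C(P_0,Q_0)=\emptyset$.

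You should also drop the integrality step. The corner-sum constraints are not totally unimodular, and the polytope of copulas between two integer-valued quasi-copulas can have fractional vertices. So ``the extreme points are permutation matrices and the bounds are integral'' proves nothing. The paper even lists, as an open question, whether an ASM imprecise copula avoiding sure loss must contain a permutation copula. A Farkas-type certificate like the one above works with real masses, so you do not need integrality. Your direct-sum extension to $n>17$ is fine.
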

\begin{proof}
    It suffices to find an example for $n=17$, , since it can be extended to larger values of $n$ for example by adding rows and columns containing a single entry equal to $1$ and all remaining entries equal to $0$, thereby preserving the alternating sign matrix property. Let $A \in \A_{17}$ be the following alternating sign matrix:
    \newcommand{\cc}{\cellcolor{black!15!white}}
    $$A=\left[
    \begin{array}{ccccccccccccccccc}
    0 & 0 & 0 & 0 & 0 & \bf{1} & 0 & 0 & 0 & 0 & 0 & 0 & 0 & 0 & 0 & 0 & 0 \\
    0 & 0 & 0 & 0 & 0 & 0 & 0 & 0 & \bf{1} & 0 & 0 & 0 & 0 & 0 & 0 & 0 & 0 \\
    0 & 0 & 0 & 0 & 0 & 0 & 0 & 0 & 0 & 0 & \bf{1} & 0 & 0 & 0 & 0 & 0 & 0 \\
    0 & 0 & 0 & 0 & 0 & 0 & 0 & 0 & 0 & 0 & 0 & \bf{1} & 0 & 0 & 0 & 0 & 0 \\
    0 & 0 & 0 & 0 & \bf{1} & \cc 0 & \cc 0 & \cc 0 & \cc -\bf{1} & \cc 0 & \cc 0 & \cc 0 & 0 & 0 & 0 & 0 & \bf{1} \\
    0 & 0 & 0 & 0 & 0 & \cc 0 & 0 & 0 & 0 & \bf{1} & \cc -\bf{1} & \cc 0 & 0 & 0 & 0 & \bf{1} & 0 \\
    0 & 0 & 0 & 0 & 0 & \cc 0 & 0 & 0 & \bf{1} & 0 & \cc 0 & \cc 0 & 0 & 0 & 0 & 0 & 0 \\
    0 & 0 & 0 & \bf{1} & 0 & \cc -\bf{1} & \bf{1} & 0 & 0 & 0 & \cc 0 & \cc 0 & 0 & 0 & 0 & 0 & 0 \\
    0 & 0 & \bf{1} & 0 & 0 & \cc 0 & \cc 0 & \cc 0 & \cc -\bf{1} & \cc 0 & \cc 0 & \cc 0 & 0 & 0 & \bf{1} & 0 & 0 \\
    0 & 0 & 0 & 0 & 0 & \cc 0 & \cc 0 & 0 & 0 & 0 & \bf{1} & \cc -\bf{1} & 0 & \bf{1} & 0 & 0 & 0 \\
    0 & 0 & 0 & 0 & 0 & \cc 0 & \cc 0 & 0 & \bf{1} & 0 & 0 & \cc 0 & 0 & 0 & 0 & 0 & 0 \\
    0 & \bf{1} & 0 & 0 & 0 & \cc 0 & \cc -\bf{1} & \bf{1} & 0 & 0 & 0 & \cc 0 & 0 & 0 & 0 & 0 & 0 \\
    \bf{1} & 0 & 0 & 0 & 0 & \cc 0 & \cc 0 & \cc 0 & \cc -\bf{1} & \cc 0 & \cc 0 & \cc 0 & \bf{1} & 0 & 0 & 0 & 0 \\
    0 & 0 & 0 & 0 & 0 & \bf{1} & 0 & 0 & 0 & 0 & 0 & 0 & 0 & 0 & 0 & 0 & 0 \\
    0 & 0 & 0 & 0 & 0 & 0 & \bf{1} & 0 & 0 & 0 & 0 & 0 & 0 & 0 & 0 & 0 & 0 \\
    0 & 0 & 0 & 0 & 0 & 0 & 0 & 0 & \bf{1} & 0 & 0 & 0 & 0 & 0 & 0 & 0 & 0 \\
    0 & 0 & 0 & 0 & 0 & 0 & 0 & 0 & 0 & 0 & 0 & \bf{1} & 0 & 0 & 0 & 0 & 0 
    \end{array}.
    \right]$$
    Define $P_0=Q(A)$ and $Q_0=Q(A)_M$.
    By \cite[Theorem~7]{FinalS}, the pair $(P_0,Q_0)$ is a discrete imprecise copula (note that the proof also applies to discrete quasi-copulas).
    We prove that $(P_0,Q_0)$ does not avoid sure loss by applying the characterization given in \cite[Proposition~16]{FinalS}, using the function $L^{(P_0,Q_0)}$ defined there.
    Let $R$ denote the shaded region in $A$, i.e., the disjoint union of $39$ squares determined by the mesh $L_{17}^2$.
    We claim that $L^{(P_0,Q_0)}(R)<0$.
    The region $R$ has $6$ vertices with positive multiplicity, each with multiplicity $1$, and the defect $D_{M}^{P_0}$ is equal to $-{1}$ at all $6$ vertices.
    Hence,
    \begin{align*}
        L^{(P_0,Q_0)}(R)
        &=\sum_{m_R(\mathbf{y})<0} P_0(\mathbf{y})m_R(\mathbf{y})+ \sum_{m_R(\mathbf{y})>0} Q_0(\mathbf{y}) m_R(\mathbf{y})=\\
        &=\sum_{m_R(\mathbf{y}) \neq 0} P_0(\mathbf{y})m_R(\mathbf{y})+ \sum_{m_R(\mathbf{y})>0}(Q_0(\mathbf{y})- P_0(\mathbf{y})) m_R(\mathbf{y})=\\
        &=V_{P_0}(R)+\sum_{m_R(\mathbf{y})>0}(-D_M^{P_0}(\mathbf{y}))m_R(\mathbf{y})=-{7}+{6}=-{1}.
    \end{align*}
    Therefore, $(P_0,Q_0)$ does not avoid sure loss.
    Now consider the sequences
    $P_i$ and $Q_i$ defined recursively by $P_{i+1}=(Q_i)_O$ and $Q_{i+1}=(P_{i+1})_M$ for all $i \ge 0$.
    By Theorem~\ref{thm:ASM_transformations}, the corresponding matrices $A(P_i)$ and $A(Q_i)$ are ASMs for all $i \geq 0$. 
    It follows from \cite[Proposition~10]{FinalS} (whose proof also applies to discrete quasi-copulas) that the sequences $P_i$ and $Q_i$ converge to some discrete quasi-copulas $P$ and $Q$, respectively, such that $(P,Q)$ is a self-dual discrete imprecise quasi-copula and $P_0 \le P \le Q \le Q_0$.
    Furthermore, both sequences are eventually constant, since the set of all ASMs of size $17 \times 17$ is finite.
    Consequentely, $A(P)$ and $A(Q)$ are ASMs as well, and therefore $P$ and $Q$ are minimal range discrete quasi-copulas.
    Since $P_0 \le P \le Q \le Q_0$ and $(P_0,Q_0)$ does not avoid sure loss, neither does $(P,Q)$.
\end{proof}

\section{ASM imprecise copulas corresponding to dense ASM are always coherent}\label{five}

 We now further restrict to the case of dense ASM and analyze the properties of the associated AMS imprecise copulas. 
 An ASM is \emph{dense} if in each row and in each column there is no zero entries between two nonzero entries. The structure theorem for such matrices is proved in \cite[Thm. 4.4]{KoPe-DIC}. Basic building blocks of dense ASM are matrices $F_n^k$ introduced by Brualdi and Schroeder \cite{BS}. For $k=1,2,\ldots,n$, $n\ge 1$, $F_n^k$ is a dense alternating sign matrix which has all entries in four diagonal stripes $(k,1),(k-1,2)$,$\ldots, (1,k)$ and $(1,k),(2,k+1),\ldots, (n-k+1,n)$ and $(n-k+1,n),(n-k+2,n-1),\ldots,(n,n-k+1)$ and $(n,n-k+1),(n-1,n-k),\ldots, (1,k)$ equal to $1$. Other entries between these stripes are nonzero by the denseness condition. In particular, $F_n^1$ is equal to the identity matrix 
$$\begin{pmatrix}
  1 & 0 & 0 & \cdots  & 0 & 0\\
  0 & 1 & 0 & \ddots  & 0 & 0\\
  0 & 0 & 1 &\ddots  & 0 & 0\\
 \vdots & \ddots & & \ddots & \ddots & \vdots\\
 0 & 0 & 0&  \ddots & 1 & 0\\
 0 & 0 & 0&  \cdots & 0 & 1
\end{pmatrix},$$ 
and $F_n^n$ is the antidiagonal matrix 
$$\begin{pmatrix}
  0 & 0 &  \cdots  & 0 & 0 & 1\\
  0 & 0 &  \iddots  & 0 & 1 & 0\\
  0 & 0 & \iddots  & 1 & 0 & 0\\
  \vdots  & \iddots & \iddots & \iddots  & & \vdots\\
  0 & 1 &  \iddots & 0 & 0 & 0 \\
  1 & 0 &   \cdots & 0 & 0 &0
\end{pmatrix}.$$ 
These two matrices correspond to the Fr\'echet upper and lower bound, respectively. All other matrices $F_n^k$ are proper ASM. For instance, when $n=5$ or $n=6$ we have
$$F_5^2=\begin{pmatrix}
    0 & 1 & 0 & 0 & 0 \\
    1 & -1 & 1 & 0 & 0\\
    0 & 1 & -1 & 1 & 0\\
    0 & 0 & 1 & -1 &  1\\
    0 & 0 & 0 & 1 & 0
\end{pmatrix}, 
F_5^3=\begin{pmatrix}
    0 & 0 & 1 & 0 & 0 \\
    0 & 1 & -1 & 1 & 0\\
    1 & -1 & 1 & -1 & 1\\
    0 & 1 & -1 & 1 & 0 \\
    0 & 0 & 1 & 0 & 0
\end{pmatrix},$$
and
$$F_6^4=
\begin{pmatrix}
    0 & 0 & 0 & 1 & 0 & 0\\
    0 & 0 & 1 &-1 & 1 & 0\\
    0 & 1 &-1 & 1 &-1 & 1\\
    1 &-1 & 1 &-1 & 1 & 0\\
    0 & 1 &-1 & 1 & 0 & 0\\
    0 & 0 & 1 & 0 & 0 & 0\\
\end{pmatrix}. $$

A dense ASM is \emph{irreducible} if it is either equal to $1\times 1$ matrix $1$ or it is equal to $F_n^k$ for some $n\ge 3$ and $2\le k\le n-1$.

The following theorem follows the results proved in \cite{KoPe-DIC}.

\begin{theorem}
    Suppose that $(P,Q)$ is an imprecise copula such that both $P$ and $Q$ correspond to dense ASMs. Then  $(P,Q)$  is coherent. In particular, $(P,Q)$ avoids sure loss.
\end{theorem}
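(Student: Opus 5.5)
The plan is to use the structure theorem for dense ASMs \cite[Thm. 4.4]{KoPe-DIC}: every dense ASM is a direct sum of irreducible blocks $1$ and $F_m^k$ ($m\ge 3$, $2\le k\le m-1$), placed along a permutation-like pattern. The corresponding quasi-copula is then an ordinal-sum/patchwork of the block quasi-copulas. For coherence I need two things at every grid point $(r,s)\in L_n^2$: a discrete copula $C$ with $P\le C\le Q$ and $C(r,s)=P(r,s)$, and one with $C(r,s)=Q(r,s)$. Taking a copula that attains the value at a single point suffices, because the infimum and supremum in Definition~\ref{def_coherence} are pointwise. Avoidance of sure loss then follows immediately.

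\textbf{Step 1: defect transformations of dense blocks.} First I will compute $Q(F_m^k)_M$ and $Q(F_m^k)_O$ explicitly. The example with $F_3^2$ suggests that these are permutation-matrix copulas, or at least dense ASMs. More importantly, for a quasi-copula $P$ of a dense ASM, the defect $D_M^P$ should be realised pointwise by a single copula: I expect $P_M$ itself to be a copula, namely the corner sum of a permutation matrix. Theorem~\ref{thm:ASM_transformations} already guarantees that it is an ASM. The case-by-case verification that it has no $-1$ entries uses the diagonal-stripe description of $F_m^k$. The main property to extract is that each $(-1)$-entry of a dense ASM is ``resolved'' by the defect in a way that makes the transformed matrix nonnegative. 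Hence $P_M$ and $Q_O$ are copulas whenever $P$ and $Q$ correspond to dense ASMs.

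\textbf{Step 2: reduction to the two copulas.} By \cite[Thm. 5.2]{DS-PMK}, an imprecise copula satisfies $P_M\le Q$ and $P\le Q_O$. Since $P_M\ge P$ and $Q_O\le Q$ always hold, both $C_1=P_M$ and $C_2=Q_O$ are copulas lying in $[P,Q]$. This already gives avoidance of sure loss. For coherence, however, a single copula is not enough: I need copulas touching $P$ at each prescribed point, and touching $Q$ at each prescribed point.

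\textbf{Step 3: pointwise attainment (main obstacle).} Fix $(r,s)$. I want a copula $C\in\C(P,Q)$ with $C(r,s)=P(r,s)$. The idea is to modify $C_2=Q_O$ locally. Inside the block of the dense ASM containing $(r,s)$, I choose among the permutations compatible with $F_m^k$, i.e.\ those whose support lies inside the ``diamond'' of $F_m^k$. The results of \cite{KoPe-DIC} describe these as the copulas dominated appropriately by $Q(F_m^k)$. I pick one whose corner sum at $(r,s)$ equals $P(r,s)$, and keep the other blocks as in $C_2$. The required inequalities $P\le C\le Q$ then split blockwise, because of the ordinal-sum structure. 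The genuinely hard part is the case when $P$ and $Q$ have different block decompositions, so that the blocks of $P$ and of $Q$ overlap. There I would use the inequalities (IC2) to show that the block structure of $Q$ refines compatibly with that of $P$. Then I would build $C$ by interpolating between $C_1$ and $C_2$ on rectangles where they differ, for instance by swapping two rows of a permutation matrix, lowering the value at $(r,s)$ by one at each swap. The Lipschitz property (Q3$^\dagger$) and minimal range should ensure that each swap stays inside $[P,Q]$, and monotone descent terminates at $P(r,s)$. The case of attaining $Q(r,s)$ follows symmetrically, starting from $C_1$ and moving upwards.
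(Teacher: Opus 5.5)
\textbf{Verdict: genuine gap.} Step~1 is false, and Steps~2 and~3 both depend on it.

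\textbf{Step~1 is false.} You claim that $P_M$ (dually $Q_O$) is a copula whenever $P$ (dually $Q$) corresponds to a dense ASM. But by \cite[Thm.~4.8]{KoPe-DIC}, $Q(F_m^k)_M=Q(F_m^{k-1})$, which is a proper quasi-copula for every $k\ge 3$. The case $F_3^2$ you extrapolated from is the exceptional case $k=2$, where $F_m^{1}$ is the identity. The last example in the paper shows this explicitly:
\begin{itemize}
\item for $P=Q(F_5^3)$ one has $P_M=Q(F_5^2)$;
\item for $Q=Q(F_4^2\oplus 1)$ one has $Q_O=Q(F_4^3\oplus 1)$.
\end{itemize}
Both of these ASMs have $-1$ entries. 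So Step~2 proves nothing about sure loss, and Step~3 has no copula $C_2=Q_O$ to modify. Iterating the transformation does not rescue this. For $k\ge 3$ the quasi-copula $P_M$ is proper, so $(P_M)_M$ is strictly larger than $P_M$ somewhere. Hence $(P_M)_M$ fails to lie below $Q$ for the imprecise copula $(P,P_M)$.

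\textbf{Step~3 has further problems.} The permutations you propose are those supported in the diamond of $F_m^k$, which you describe as dominated by $Q(F_m^k)$. These lie on the wrong side of $P$. For $F_4^3$ the only such permutation is $1\mapsto 3$, $2\mapsto 4$, $3\mapsto 1$, $4\mapsto 2$. Its corner sum at $(2,2)$ is $0<1=P(2,2)$, so it is not in $\C(P,Q)$. Moreover, the claim that each row swap stays inside $[P,Q]$ is only asserted (``should ensure''). That claim is exactly the nontrivial content of the theorem.

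\textbf{The missing ingredient} is \cite[Thm.~4.10]{KoPe-DIC}. For $k\ge 3$, it says that $Q(F_m^k)$ is the pointwise minimum of the copulas in $\C\bigl(Q(F_m^k),Q(F_m^{k-1})\bigr)$. These are copulas lying \emph{above} $P$ and below $P_M$. For $k=2$, the equality $P_M=M$ forces $Q=M$, and the same argument applies. The paper then proceeds as follows:
\begin{itemize}
\item Since $P_M\le Q$ (the correct part of your Step~2), we have $\C(P,P_M)\subseteq\C(P,Q)$. Combined with the result above, this gives $P=\min_{C\in\C(P,Q)}C$, using only $P$.
\item Dually, $Q=\max_{C\in\C(Q_O,Q)}C=\max_{C\in\C(P,Q)}C$, using only $Q$.
\item The patchwork block structure of dense ASMs is then treated separately for $P$ and for $Q$.
\end{itemize}

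\textbf{This decoupling removes your ``genuinely hard part'' entirely.} That matters, because your proposed refinement claim is false in general: the block structures of $P$ and $Q$ need not be comparable in a fixed direction.
\begin{itemize}
\item In the paper's example, the decomposition of $Q$ ($F_4^2\oplus 1$) is finer than that of $P=Q(F_5^3)$.
\item For $P=W=Q(F_n^n)$ and $Q=Q(F_n^2)$ with $n\ge 3$, it is coarser. This pair is an imprecise copula, since $W_M=W\le Q$ and $W\le Q_O$.
\end{itemize}
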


\begin{proof}
First, we assume that the ASM $A$ corresponding to $P$ is an irreducible dense ASM. Then $A=F_n^k$ for some $n\ge 3$ and $2\le k\le n-1$ since we assumed that $n\ge 2$. By \cite[Thm. 4.8]{KoPe-DIC}, $P_M=Q\left(F_n^{k-1}\right)$. Consider first the case $k\ge 3$. Then \cite[Thm. 4.10]{KoPe-DIC} implies that $P=\min_{C\in\C(P,P_M)} C $. Since $(P,Q)$ is an imprecise copula, \cite[Thm. 5.2]{DS-PMK} yields $P_M\le Q$. Hence, $\C(P,P_M)\subset\C(P,Q)$ and therefore $P=\min_{C\in\C(P,Q)} C $. 

Now suppose that $k=2$. Then, $P_M=Q(F_n^1)=M$ is the Fr\'echet-Hoeffding upper bound. This implies that $Q=M$ since $P_M\le Q$ by \cite[Thm. 5.2]{DS-PMK}. Arguing as in the proof of \cite[Theorem~4.10]{KoPe-DIC}, we obtain $ P=\min_{C\in\C(P,Q)} C.$

Next, suppose that the ASM $B$ corresponding to $Q$ is irreducible and dense. By arguments analogous to those above, we obtain that $P\le Q_O$ and $Q=\max_{C\in\C(Q_O,Q)} C=\max_{C\in\C(P,Q)} C$. 

Assume now that neither of the ASM corresponding to $P$ and $Q$ is irreducible. 
    By \cite[Thm. 4.4]{KoPe-DIC} then ASM $A$ corresponding to $P$ has block structure form 
    $$A=\begin{pmatrix}
 A_{11} & A_{12} & \cdots  & A_{1l} \\
 A_{21} & A_{22} & \cdots  & A_{2l} \\
 \vdots & \vdots & & \vdots\\
 A_{l1} & A_{l2} & \cdots  & A_{ll} 
\end{pmatrix},$$
    where $l\ge 2$ and there exists a permutation $\sigma$ of $\{1,2,\ldots,l\}$ such that block $A_{r,\sigma(r)}$  is an irreducible dense ASM  and $A_{rs}=0$ for $s\neq \sigma(r)$. 
    
    Moreover, each nonzero block that is not $1\times 1$ is of the form $F_{m_r}^{k_r}$, where integers $m_r$ and $k_r$ are such that $m_r\ge 3$ and $2\le k_r\le m_r-1$. Since $P$ is a proper quasi-copula, we have $m_r\ge 3$ for at least one index $r$. Furthermore, $P$ is obtained as a patchwork of irreducible dense quasi-copulas and zero blocks (see \cite[Thm. 1]{K-BKOS} and \cite[Sect. {3}]{KoPe-DIC}). Arguing as in the proof of \cite[Thm. 3.2]{KoPe-DIC}, the main defect $D_M^P$ has a block structure compatible with that of $P$, and its nonzero blocks of $D_M^P$ occur only in positions $(r,\sigma(r))$, where $m_r\ge 3$. 
    
    Hence, the imprecise copula $(P,P_M)$ is a patchwork of quasi-copulas corresponding to $(F_{m_r}^{k_r},F_{m_r}^{k_r-1})$ (cf. \cite[Thm. 4.8]{KoPe-DIC}), where $m_r\ge 3$, and constant blocks when $s\neq\sigma(r)$ or $m_r=1$. Since for each block  $(Q(F_{m_r}^{k_r}),Q(F_{m_r}^{k_r-1}))$ the first quasi-copula corresponds to an irreducible dense ASM and is coherent, it follows that $P=\min_{C\in\C(P,Q)} C$. 

    Finally, analogous arguments show that $Q=\max_{C\in\C(P,Q)} C $. Therefore, $(P,Q)$ is coherent.
\end{proof}

The following example shows that the block structure for $P$ and $Q$ of an imprecise copula $(P,Q)$ need not coincide.

\begin{example}
    Consider two $5\times 5$ ASMs with block structures
    $$A=F_5^3 \quad \text{and}\quad B=\begin{pmatrix}
F_4^2 & 0  \\
 0 & 1 
\end{pmatrix}.$$
Their corresponding discrete quasi-copulas are 
    $$P=Q(A)=\begin{pmatrix}
 0 & 0 & 1 & 1 & 1\\
 0 & 1 & 1 & 2 & 2\\
 1 & 1 & 2 & 2 & 3\\
 1 & 2 & 2 & 3 & 4 \\
 1 & 2 & 3 & 4 & 5 
\end{pmatrix}\quad \text{and}\quad Q=Q(B)=\begin{pmatrix} 
 0 & 1 & 1 & 1 & 1\\
 1 & 1 & 2 & 2 & 2\\
 1 & 2 & 2 & 3 & 3\\
 1 & 2 & 3 & 4 & 4 \\
 1 & 2 & 3 & 4 & 5 
\end{pmatrix}.$$
The quasi-copulas $P_M$ and $Q_O$ correspond, respectively, to the ASMs
$$F_5^2\quad \text{and} \quad \begin{pmatrix}
F_4^3 & 0  \\
 0 & 1 
\end{pmatrix}.$$
They are given by
$$P_M=\begin{pmatrix}
 0 & 1 & 1 & 1 & 1\\
 1 & 1 & 2 & 2 & 2\\
 1 & 2 & 2 & 3 & 3\\
 1 & 2 & 3 & 3 & 4 \\
 1 & 2 & 3 & 4 & 5 
\end{pmatrix}
\quad \text{and}\quad Q_O=\begin{pmatrix} 
 0 & 0 & 1 & 1 & 1\\
 0 & 1 & 1 & 2 & 2\\
 1 & 1 & 2 & 3 & 3\\
 1 & 2 & 3 & 4 & 4 \\
 1 & 2 & 3 & 4 & 5 
\end{pmatrix}.$$
Comparing the quasi-copulas component-wise, we obtain $P_M\le Q$ and $P\le Q_O$. Hence, $(P,Q)$ is an imprecise copula.
\end{example}

\section{Conclusion}

In this paper, we study discrete quasi-copulas and discrete imprecise copulas of minimal range, which correspond to ASM. We show that these discrete quasi-copulas are invariant under defect transformations, establishing a strong structural stability property of this class.
At the same time, we demonstrate by means of a constructive example that discrete imprecise copulas of minimal range do not, in general, avoid sure loss. In contrast, we prove that imprecise copulas corresponding to dense alternating sign matrices are always coherent and therefore avoid sure loss.
Our results thus reveal an interesting distinction within the class of minimal-range imprecise copulas: while invariance under defect transformations holds throughout the entire class, coherence is assured under the additional density assumption.

The results presented in this paper leave several questions open for future investigation. In particular, it would be interesting to understand how defect transformations interact with order-theoretic properties of alternating sign matrices. For example, one may ask how the ranks of transformed matrices compare with the ranks of the original matrix. Another natural question concerns the existence of a copula corresponding to a permutation matrix within an imprecise copula associated with alternating sign matrices that avoids sure loss.

Another direction for further research concerns approximation of quasi-copulas and imprecise copulas of full domain with discrete quasi-copulas and discrete imprecise copulas, respectively. This problem is studied in a separate paper \cite{KoPeSt-Ext}.

\bibliographystyle{plain}
\bibliography{references} 

\end{document}